\newtheorem{theorem}{Theorem}
\newtheorem{lemma}[theorem]{Lemma}
\newtheorem*{cor}{Corollary}
\begin{document}

\title{Finiteness conditions in covers of Poincar\'e duality spaces}

\author{Jonathan A. Hillman }
\address{School of Mathematics and Statistics\\
     University of Sydney, NSW 2006\\
      Australia }

\email{jonathan.hillman@sydney.edu.au}

\begin{abstract}
A closed 4-manifold (or, more generally, a finite $PD_4$-space) 
has a finitely dominated infinite regular covering space if and only if 
either its universal covering space is finitely dominated or 
it is finitely covered by the mapping torus of a self homotopy equivalence 
of a $PD_3$-complex.
\end{abstract}

\keywords{4-dimensional, finitely dominated, $PD$-group, $PD$-space}

\subjclass{57P10}

\maketitle

A space $X$ is a {\it Poincar\'e duality space\/} if it has
the homotopy type of a cell complex which satisfies Poincar\'e 
duality with local coefficients (with respect to some orientation character
$w:\pi=\pi_1(X)\to\{\pm1\}$).
It is {\it finite\/} if the singular chain complex of the universal cover
$\widetilde{X}$ is chain homotopy equivalent to a finite free 
$\mathbb{Z}[\pi]$-complex.
(The $PD$-space $X$ is homotopy equivalent to a Poincar\'e duality complex 
$\Leftrightarrow$ it is finitely dominated $\Leftrightarrow$
$\pi$ is finitely presentable.
See \cite{Bd}.)
Closed manifolds are finite $PD$-complexes.
The more general notion arises naturally
in connection with Poincar\'e duality groups \cite{Br},
and in considering covering spaces of manifolds \cite{HK}.

In this note we show that finiteness hypotheses in two theorems 
about covering spaces of $PD$-complexes may be relaxed.
Theorem 5 extends a criterion of Stark to all Poincar\'e duality groups.
The main result is Theorem 6,
which characterizes finite $PD_4$-spaces with finitely dominated 
infinite regular covering spaces.

\section{some lemmas}

Let $X$ be a $PD_n$-space with fundamental group $\pi$.
Let $\beta_i(X;\mathbb{Q})=\mathrm{dim}_{\mathbb{Q}}H_i(X;\mathbb{Q})$
and $\beta_i^{(2)}(X)=\mathrm{dim}_{\mathcal{N}(\pi)}H_i(X;\mathcal{N}(\pi))$
be the $i$th rational Betti number and $i$th $L^2$ Betti number of $X$,
respectively.

\begin{lemma}
Let $X$ be a $PD_n$-space with fundamental group $\pi$.
Then $\Sigma\beta_i(X;\mathbb{Q})<\infty$ and
$\Sigma\beta_i^{(2)}(X)$ $<\infty$.
If $X$ is finite then 
$\chi(X)=\Sigma(-1)^i\beta_i(X;\mathbb{Q})=\Sigma(-1)^i\beta_i^{(2)}(X)$.
\end{lemma}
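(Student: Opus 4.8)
The plan is to reduce everything to the standard structural fact about Poincar\'e duality spaces: for a $PD_n$-space $X$ the cellular chain complex $C_*=C_*(\widetilde X)$ of the universal cover is $\mathbb{Z}[\pi]$-chain homotopy equivalent to a chain complex $P_*$ of finitely generated projective $\mathbb{Z}[\pi]$-modules with $P_i=0$ for $i<0$ or $i>n$ (see \cite{Bd}); crucially this holds with no finiteness hypothesis on $\pi$. Granting this, the finiteness assertions follow by base change. Since $H_*(X;\mathbb{Q})$ is the homology of $\mathbb{Q}\otimes_{\mathbb{Z}[\pi]}C_*\simeq\mathbb{Q}\otimes_{\mathbb{Z}[\pi]}P_*$, where $\mathbb{Q}$ is a $\mathbb{Z}[\pi]$-module via the augmentation, it is computed by a complex of finite-dimensional $\mathbb{Q}$-vector spaces concentrated in degrees $0,\dots,n$; hence each $\beta_i(X;\mathbb{Q})$ is finite and they vanish outside that range, so $\Sigma\beta_i(X;\mathbb{Q})<\infty$.

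For the $L^2$ Betti numbers, base change along $\mathbb{Z}[\pi]\to\mathcal{N}(\pi)$ gives $H_*(X;\mathcal{N}(\pi))=H_*(\mathcal{N}(\pi)\otimes_{\mathbb{Z}[\pi]}P_*)$, a complex of finitely generated projective $\mathcal{N}(\pi)$-modules in degrees $0,\dots,n$. A finitely generated projective $\mathcal{N}(\pi)$-module has finite von Neumann dimension, and the dimension is monotone on submodules and quotients, so $\beta_i^{(2)}(X)=\dim_{\mathcal{N}(\pi)}H_i\le\dim_{\mathcal{N}(\pi)}(\mathcal{N}(\pi)\otimes_{\mathbb{Z}[\pi]}P_i)<\infty$; again these vanish outside $0,\dots,n$, so $\Sigma\beta_i^{(2)}(X)<\infty$.

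For the Euler characteristic formulas, suppose $X$ is finite, so that $C_*$ is $\mathbb{Z}[\pi]$-chain homotopy equivalent to a finite free complex $F_*$ and $\chi(X)=\Sigma(-1)^i\mathrm{rank}_{\mathbb{Z}[\pi]}F_i$. Tensoring with $\mathbb{Q}$ over $\mathbb{Z}[\pi]$ via the augmentation gives a finite complex of $\mathbb{Q}$-vector spaces with $\dim_{\mathbb{Q}}(\mathbb{Q}\otimes_{\mathbb{Z}[\pi]}F_i)=\mathrm{rank}_{\mathbb{Z}[\pi]}F_i$ and homology $H_*(X;\mathbb{Q})$, so the ordinary Euler--Poincar\'e relation yields $\chi(X)=\Sigma(-1)^i\beta_i(X;\mathbb{Q})$. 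Likewise $\mathcal{N}(\pi)\otimes_{\mathbb{Z}[\pi]}F_*$ is a finite complex of finitely generated free $\mathcal{N}(\pi)$-modules with homology $H_*(X;\mathcal{N}(\pi))$; since the von Neumann dimension is additive on short exact sequences, the $L^2$ Euler--Poincar\'e relation gives $\Sigma(-1)^i\beta_i^{(2)}(X)=\Sigma(-1)^i\dim_{\mathcal{N}(\pi)}(\mathcal{N}(\pi)\otimes_{\mathbb{Z}[\pi]}F_i)=\Sigma(-1)^i\mathrm{rank}_{\mathbb{Z}[\pi]}F_i=\chi(X)$.

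The only substantive input is the first sentence, namely that a $PD_n$-space has $\mathbb{Z}[\pi]$-finitely dominated universal cover chain complex concentrated in degrees $0,\dots,n$; together with the two routine properties of the von Neumann dimension (finiteness on finitely generated modules and additivity), everything else is bookkeeping. The point worth emphasizing is that finite presentability of $\pi$ is never used for the finiteness of $\Sigma\beta_i(X;\mathbb{Q})$ and $\Sigma\beta_i^{(2)}(X)$; it is needed only to replace $X$ up to homotopy by a genuine finite complex.
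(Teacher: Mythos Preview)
Your argument is correct and follows the same line as the paper's: obtain a finite projective $\mathbb{Z}[\pi]$-complex chain homotopy equivalent to $C_*(\widetilde X)$, then base change to $\mathbb{Q}$ and to $\mathcal{N}(\pi)$, and for the Euler characteristic use a finite \emph{free} model together with additivity of the von Neumann dimension (the paper simply cites L\"uck \cite{Lu} for this last step, while you spell it out). The one point of divergence is the justification of the first step in the generality you claim: you cite Browder \cite{Bd}, but that reference treats the finitely dominated setting (equivalently, $\pi$ finitely presentable), whereas the paper obtains the finite projective model in general by noting that Poincar\'e duality forces cohomology, like homology, to commute with direct limits of coefficient modules and then applying the Brown--Strebel finiteness criterion \cite{Br75}.
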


\begin{proof}
Since $X$ is a $PD_n$-space and homology commutes with 
direct limits of coefficient modules so does cohomology.
Therefore the singular chain complex of $\widetilde{X}$ 
is chain homotopy equivalent over $\mathbb{Z}[\pi]$
to a finite projective complex $P_*$, 
by the Brown-Strebel finiteness criterion \cite{Br75}.
Hence $H_i(X;\mathbb{Q})=
H_i(\mathbb{Q}\otimes_{\mathbb{Z}[\pi]}{P_*})$
and $H_i(X;\mathcal{N}(\pi))=H_i(\mathcal{N}(\pi)\otimes_{\mathbb{Z}[\pi]}{P_*})$.
The first assertion follows immediately.
The proof of the $L^2$-Euler characteristic formula for finite complexes
given in \cite{Lu} is entirely homological, 
and requires only that $C_*$ be chain homotopy
equivalent to a finite free complex.
\end{proof}

If the strong Bass conjecture holds for $\pi$ then 
the $L^{(2)}$-Euler characteristic formula holds
even if $X$ is not finite \cite{Ec}.
 
The following lemma is essentially from \cite{Fa}.
We shall use it in conjunction with universal coefficient spectral
sequences.

\begin{lemma}
Let $G$ be a group and $k$ be $\mathbb{Z}$ or a field, 
and let $A$ be a $k[G]$-module
which is free of finite rank $m$ as a $k$-module.
Then $Ext^q_{k[G]}(A,k[G])\cong(H^q(G;k[G]))^m$ for all $q$.
\end{lemma}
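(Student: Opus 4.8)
The plan is to build an explicit free resolution of $A$ out of one for the trivial module and then ``untwist''. First I would fix a free resolution $F_*\to k$ of the trivial $k[G]$-module, so that $H^q(G;k[G])=H^q(\mathrm{Hom}_{k[G]}(F_*,k[G]))$ by definition. Since $A$ is free of finite rank as a $k$-module, the functor $-\otimes_k A$ is exact, so $F_*\otimes_k A\to A$ (with the diagonal $G$-action throughout) is again a resolution; and each $F_i\otimes_k A$ is $k[G]$-free, because $k[G]\otimes_k A$ with the diagonal action is free of rank $m$. Hence $\mathrm{Ext}^q_{k[G]}(A,k[G])=H^q(\mathrm{Hom}_{k[G]}(F_*\otimes_k A,k[G]))$.

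Next I would compute the cochain complex on the right. By the tensor--hom adjunction for $k[G]$-modules, $\mathrm{Hom}_{k[G]}(F_i\otimes_k A,k[G])\cong\mathrm{Hom}_{k[G]}(F_i,\mathrm{Hom}_k(A,k[G]))$, where $\mathrm{Hom}_k(A,k[G])$ carries the conjugation action $(g\cdot f)(a)=g\,f(g^{-1}a)$. As $A$ is finitely generated free over $k$, this $G$-module is $A^\ast\otimes_k k[G]$ with the diagonal action, where $A^\ast=\mathrm{Hom}_k(A,k)$. The key step is the standard untwisting isomorphism: for any $k[G]$-module $M$, the assignment $m\otimes g\mapsto g^{-1}m\otimes g$ defines a $k[G]$-isomorphism from $M\otimes_k k[G]$ with the diagonal action onto $M_0\otimes_k k[G]$ with $G$ acting only on the right factor, $M_0$ being $M$ with trivial action. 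Taking $M=A^\ast\cong k^m$ gives $\mathrm{Hom}_k(A,k[G])\cong k[G]^m$ as $k[G]$-modules.

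Substituting this back, $\mathrm{Hom}_{k[G]}(F_i\otimes_k A,k[G])\cong\mathrm{Hom}_{k[G]}(F_i,k[G])^m$, and since all the isomorphisms used are functorial in $F_i$ they commute with the coboundary maps; passing to cohomology then gives $\mathrm{Ext}^q_{k[G]}(A,k[G])\cong H^q(G;k[G])^m$ for every $q$. I expect the only real work to be in the verifications hidden behind ``standard'': that the diagonal action on $F_i\otimes_k A$ really yields a free module (again an instance of the untwisting isomorphism, with $M=k[G]$), that the adjunction isomorphism and the identification of $\mathrm{Hom}_k(A,k[G])$ with a twisted tensor product are $G$-equivariant, and that naturality is preserved along the whole chain. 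None of these should be a genuine obstacle once the untwisting trick is set up, and the case $k=\mathbb{Z}$ is handled identically, since all that is used about $k$ is that $A$ is finitely generated free over it.
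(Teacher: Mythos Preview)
Your proof is correct and follows essentially the same route as the paper's: the paper quotes the identity $\mathrm{Ext}^q_{k[G]}(A,k[G])\cong H^q(G;\mathrm{Hom}_k(A,k[G]))$ from Proposition~III.2.2 of Brown, which is exactly what your resolution $F_*\otimes_kA$ together with the tensor--hom adjunction produces, and then both arguments conclude by identifying $\mathrm{Hom}_k(A,k[G])\cong k[G]^m$ as $k[G]$-modules. Your explicit untwisting isomorphism $m\otimes g\mapsto g^{-1}m\otimes g$ is simply a more careful justification of this last identification than the evaluation-at-a-basis map the paper writes down.
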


\begin{proof}
Let $(g\phi)(a)=g.\phi(g^{-1}a)$ for all $g\in{G}$ and $\phi\in{Hom_k(A,k[G])}$.
Let $\{\alpha_i\}_{1\leq{i}\leq{m}}$ be a basis for $A$ as a free $k$-module,
and define a map $f:Hom_k(A,k[G])$ $\to{k[G]}^m$ 
by $f(\phi)=(\phi(\alpha_1),\dots,\phi(\alpha_m))$
for all $\phi\in{Hom_k(A,k[G])}$.
Then $f$ is an isomorphism of left $k[G]$-modules.
The lemma now follows, since
$Ext^q_{k[G]}(A,k[G])\cong{H^q}(G;Hom_k(A,k[G]))$.
(See Proposition III.2.2 of \cite{Br}.)
\end{proof}

\begin{lemma}
If $H^q(G;\mathbb{Z}[G])$ is $0$ (respectively,
finitely generated as an abelian group) for 
all $q\leq{q_0}$ and $B$ is a $\mathbb{Z}[G]$-module 
which is finitely generated as an abelian group 
then $Ext^q_{\mathbb{Z}[G]}(B,\mathbb{Z}[G])$ is $0$
(respectively, finitely generated as an abelian group) for all $q\leq{q_0}$.
\end{lemma}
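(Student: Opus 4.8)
The plan is to reduce, by a filtration argument, to the case where the coefficient module $B$ is free of finite rank over $\mathbb{Z}$, where Lemma 2 applies directly, or is a finite-dimensional vector space over a prime field. For the first step, note that the torsion subgroup $T$ of $B$ is a $\mathbb{Z}[G]$-submodule and $B/T$ is free of finite rank over $\mathbb{Z}$, while if $T\neq 0$ then, for a prime $p$ dividing $|T|$, the subgroup $\{x\in T\mid px=0\}$ is a nonzero $\mathbb{Z}[G]$-submodule of finite $\mathbb{F}_p$-dimension whose quotient has smaller order. Iterating, $B$ has a finite filtration by $\mathbb{Z}[G]$-submodules whose successive quotients are free of finite rank over $\mathbb{Z}$ or finite-dimensional $\mathbb{F}_p$-vector spaces for various primes $p$. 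Feeding the short exact sequences of this filtration into the long exact sequence of $Ext^*_{\mathbb{Z}[G]}(-,\mathbb{Z}[G])$ and inducting on the length, it suffices to prove the lemma for $B$ of one of these two types. If $B$ is free of rank $m$ over $\mathbb{Z}$, Lemma 2 with $k=\mathbb{Z}$ gives $Ext^q_{\mathbb{Z}[G]}(B,\mathbb{Z}[G])\cong(H^q(G;\mathbb{Z}[G]))^m$, and we are done.

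So let $B=V$ be a finite-dimensional $\mathbb{F}_p$-vector space. The argument turns on the short exact sequence of $(\mathbb{Z}[G],\mathbb{Z}[G])$-bimodules $0\to\mathbb{Z}[G]\xrightarrow{p}\mathbb{Z}[G]\to\mathbb{F}_p[G]\to 0$. Applying $Ext^*_{\mathbb{Z}[G]}(V,-)$, and using that multiplication by $p$ annihilates $V$ and hence also $Ext^*_{\mathbb{Z}[G]}(V,\mathbb{Z}[G])$, the long exact sequence breaks into short exact sequences
\[0\to Ext^q_{\mathbb{Z}[G]}(V,\mathbb{Z}[G])\to Ext^q_{\mathbb{Z}[G]}(V,\mathbb{F}_p[G])\to Ext^{q+1}_{\mathbb{Z}[G]}(V,\mathbb{Z}[G])\to 0.\]
Hence it is enough to show that $Ext^q_{\mathbb{Z}[G]}(V,\mathbb{F}_p[G])$ is $0$ (respectively, finitely generated as an abelian group) for $q\le q_0-1$, for then the displayed sequence gives the same for $Ext^q_{\mathbb{Z}[G]}(V,\mathbb{Z}[G])$ in the range $q\le q_0$. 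To compute $Ext^*_{\mathbb{Z}[G]}(V,\mathbb{F}_p[G])$, I would take a free $\mathbb{Z}[G]$-resolution $P_*\to V$ and use the adjunction isomorphism $Hom_{\mathbb{Z}[G]}(P_*,\mathbb{F}_p[G])\cong Hom_{\mathbb{F}_p[G]}(\overline{P}_*,\mathbb{F}_p[G])$, where $\overline{P}_*=\mathbb{F}_p[G]\otimes_{\mathbb{Z}[G]}P_*$ is a complex of free $\mathbb{F}_p[G]$-modules with homology $\mathrm{Tor}^{\mathbb{Z}[G]}_*(\mathbb{F}_p[G],V)$; since $pV=0$ this homology is $V$ in degrees $0$ and $1$ and vanishes otherwise. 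The universal coefficient spectral sequence of the $\mathbb{F}_p[G]$-complex $\overline{P}_*$ then has $E_2^{s,t}=Ext^s_{\mathbb{F}_p[G]}(V,\mathbb{F}_p[G])$ for $t\in\{0,1\}$ (and $0$ otherwise), converging to $Ext^{s+t}_{\mathbb{Z}[G]}(V,\mathbb{F}_p[G])$, so it suffices to bound $Ext^s_{\mathbb{F}_p[G]}(V,\mathbb{F}_p[G])$ for $s\le q_0-1$.

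Finally, Lemma 2 with $k=\mathbb{F}_p$ gives $Ext^s_{\mathbb{F}_p[G]}(V,\mathbb{F}_p[G])\cong(H^s(G;\mathbb{F}_p[G]))^{\dim_{\mathbb{F}_p}V}$, and $H^*(G;-)$ applied to $0\to\mathbb{Z}[G]\xrightarrow{p}\mathbb{Z}[G]\to\mathbb{F}_p[G]\to 0$ gives short exact sequences $0\to H^s(G;\mathbb{Z}[G])/pH^s(G;\mathbb{Z}[G])\to H^s(G;\mathbb{F}_p[G])\to{}_pH^{s+1}(G;\mathbb{Z}[G])\to 0$, where ${}_pM$ denotes the $p$-torsion subgroup of $M$. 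So if $H^q(G;\mathbb{Z}[G])$ is $0$ (respectively, finitely generated) for all $q\le q_0$, then $H^s(G;\mathbb{F}_p[G])$, and hence $Ext^s_{\mathbb{F}_p[G]}(V,\mathbb{F}_p[G])$, is $0$ (respectively, finitely generated) for $s\le q_0-1$, which is exactly what the previous paragraph requires. The one delicate point is the bookkeeping of degrees: passing from $H^*(G;\mathbb{Z}[G])$ to $H^*(G;\mathbb{F}_p[G])$ via the Bockstein sequence costs one degree, and one must check that this is exactly regained from the short exact sequence relating $Ext^*_{\mathbb{Z}[G]}(V,\mathbb{F}_p[G])$ to $Ext^*_{\mathbb{Z}[G]}(V,\mathbb{Z}[G])$. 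I expect the finite $\mathbb{F}_p$-module case, and this degree count, to be the only real obstacle; the free case and the filtration reduction are routine.
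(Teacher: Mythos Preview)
Your argument is correct, including the degree bookkeeping: the Bockstein costs one degree, but this is exactly recovered because $Ext^{q}_{\mathbb{Z}[G]}(V,\mathbb{Z}[G])$ is a \emph{quotient} of $Ext^{q-1}_{\mathbb{Z}[G]}(V,\mathbb{F}_p[G])$ for $q\ge 1$ (and $Ext^0_{\mathbb{Z}[G]}(V,\mathbb{Z}[G])=Hom_{\mathbb{Z}[G]}(V,\mathbb{Z}[G])=0$ since $V$ is torsion and $\mathbb{Z}[G]$ is torsion-free, which you should perhaps say explicitly). The universal coefficient spectral sequence for $\overline{P}_*$ is legitimate: $\overline{P}_*$ is a bounded-below complex of free $\mathbb{F}_p[G]$-modules, so the relevant double complex is first-quadrant and the two-row spectral sequence converges.

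However, the paper's route is considerably shorter and avoids the change of rings to $\mathbb{F}_p$ entirely. The trick is this: since the torsion submodule $T$ is finite, $G$ acts on it through a finite quotient $G/H$, so $T$ is a finitely generated $\mathbb{Z}[G/H]$-module and hence a quotient of a finitely generated free $\mathbb{Z}[G/H]$-module $A$. But $\mathbb{Z}[G/H]$ is itself free of finite rank over $\mathbb{Z}$, so both $A$ and the kernel $A_1=\ker(A\twoheadrightarrow T)$ are $\mathbb{Z}[G]$-modules which are free of finite rank as abelian groups. Lemma~2 with $k=\mathbb{Z}$ then applies directly to $A$, $A_1$, and $B/T$, and the long exact sequences of $Ext^*_{\mathbb{Z}[G]}(-,\mathbb{Z}[G])$ for $0\to A_1\to A\to T\to 0$ and $0\to T\to B\to B/T\to 0$ finish the job in two steps. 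Your filtration of $T$ by $\mathbb{F}_p$-vector-space subquotients and the subsequent Bockstein/spectral-sequence analysis are thus unnecessary: one presentation of $T$ by $\mathbb{Z}$-free $\mathbb{Z}[G]$-modules suffices, and the observation that such a presentation exists (via the finite quotient $G/H$) is the main point you are missing.
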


\begin{proof}
Let $T$ be the $\mathbb{Z}$-torsion submodule of $B$,
and let $H$ be the kernel of the action of $G$ on $T$.
Then $T$ is a finite $\mathbb{Z}[G/H]$-module,
and so is a quotient of a finitely generated free $\mathbb{Z}[G/H]$-module $A$.
Let $A_1$ be the kernel of the projection from $A$ to $T$.
Clearly $A$ and $A_1$ are $\mathbb{Z}[G]$-modules 
which are free of (the same) finite rank as abelian groups.
We now apply the long exact sequence of 
$Ext^*_{\mathbb{Z}[G]}(-,\mathbb{Z}[G])$ together with Lemma 2
to the short exact sequences
\[
0\to{A_1}\to{A}\to{T}\to0
\] 
and 
\[0\to{T}\to{B}\to{B/T}\to0.
\]
\end{proof}

\newpage
\section{virtual poincar\'e duality groups}

Stark has shown that a finitely presentable group $G$ of finite virtual 
cohomological dimension is a virtual Poincar\'e duality group 
if and only if it is the fundamental group of a closed $PL$ manifold $M$ 
whose universal cover $\widetilde{M}$ is homotopy finite \cite{St95}.
The main step in showing the sufficiency of the latter condition involves 
showing first that $G$ is of type $vFP$, and is established in \cite{St96}.
If $G_1$ is an $FP$ subgroup of finite index in $G$
then $B=K(G_1,1)$ is finitely dominated.
Hence on applying the Gottlieb-Quinn Theorem to the
fibration $\widetilde{M}\to{M_1}\to{B}$ of the associated 
covering space $M_1$ it follows that 
$\widetilde{M}$ and $B$ are Poincar\'e duality complexes.
In particular, $G_1$ is a Poincar\'e duality group.

There are however Poincar\'e duality groups in every dimension $n\geq4$ which
are not finitely presentable.
We shall give an analogue of Stark's sufficiency result for such groups,
using an algebraic criterion instead of the Gottlieb-Quinn Theorem.
In the next two results we shall assume that
$M$ is a $PD_n$-space with fundamental group $\pi$, 
$M_\nu$ is the covering space associated to
a normal subgroup $\nu$ of $\pi$, $G=\pi/\nu$ and
$k$ is $\mathbb{Z}$ or a field.

\begin{lemma}
Suppose that $H_p(M_\nu;k)$ is finitely generated for all $p\leq[n/2]$.
Then $H_p(M_\nu;k)$ is finitely generated for all $p$ if and only if 
$H^q(G;k[G])$ is finitely generated as a $k$-module for $q\leq[(n-1)/2]$,
and then $H^q(G;k[G])$ is finitely generated as a $k$-module for all $q$.
If $H^s(G;k[G])=0$ for $s<q$ then $H_{n-s}(M_\nu;k)=0$ for $s<q$
and $H_{n-q}(M_\nu;k)\cong{H^q}(G;k[G])$.
\end{lemma}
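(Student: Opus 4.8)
The plan is to pull everything back to the equivariant chain complex of the universal cover. As in the proof of Lemma 1 (via the Brown--Strebel criterion), $C_*:=C_*(\widetilde M;k)$ is chain homotopy equivalent over $k[\pi]$ to a finite complex of finitely generated projective modules, and Poincar\'e duality for $M$ gives $C_*\simeq\overline{Hom_{k[\pi]}(C_*,k[\pi])}[n]$. Tensoring this over $k[\pi]$ with $k[G]$ and using degreewise finite generation, $C_*(M_\nu;k)=k[G]\otimes_{k[\pi]}C_*$ is chain homotopy equivalent over $k[G]$ to a finite complex $D_*$ of finitely generated projective $k[G]$-modules with $H_t(D_*)=H_t(M_\nu;k)$, and
\[
H_{n-j}(M_\nu;k)\cong H^j\big(Hom_{k[G]}(D_*,k[G])\big)\qquad\text{for all }j
\]
(the orientation twist being immaterial here). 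The right-hand side is the abutment of the universal coefficient spectral sequence
\[
E_2^{s,t}=Ext^s_{k[G]}\big(H_t(M_\nu;k),k[G]\big)\ \Longrightarrow\ H^{s+t}\big(Hom_{k[G]}(D_*,k[G])\big)\cong H_{n-s-t}(M_\nu;k),
\]
which has only finitely many nonzero rows, hence converges; its bottom row is $E_2^{s,0}=H^s(G;k[G])$, and its differentials run $d_r\colon E_r^{s,t}\to E_r^{s+r,\,t-r+1}$.

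The engine of the argument is the following consequence of Lemmas 1 and 2: if $B$ is a $k[G]$-module which is finitely generated over $k$, and $H^t(G;k[G])$ is finitely generated over $k$ (respectively, zero) for all $t\le s_0$, then $Ext^s_{k[G]}(B,k[G])$ is finitely generated over $k$ (respectively, zero) for all $s\le s_0$ --- over $\mathbb Z$ this is Lemma 2, while over a field $B$ is automatically $k$-free of finite rank and Lemma 1 applies directly. In particular, by the standing hypothesis, for every $t\le[n/2]$ the row $E_2^{*,t}$ is finitely generated in degrees $s\le s_0$ as soon as $H^{*}(G;k[G])$ is, and vanishes in degrees $s\le s_0$ as soon as $H^{*}(G;k[G])$ does.

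For the first assertion I run a degree induction in the spectral sequence. Forward implication: assuming every $H_p(M_\nu;k)$ is finitely generated, I induct on $s$ to prove $H^s(G;k[G])$ finitely generated. No differential leaves $E_r^{s,0}$ (its target has negative second coordinate), and each differential entering it issues from some $E_r^{s-r,\,r-1}$, a subquotient of $Ext^{s-r}_{k[G]}(H_{r-1}(M_\nu;k),k[G])$, finitely generated by the inductive hypothesis, the engine, and the hypothesis that all $H_*(M_\nu;k)$ are finitely generated; meanwhile $E_\infty^{s,0}$ is a subquotient of the finitely generated abutment $H_{n-s}(M_\nu;k)$. Hence $H^s(G;k[G])=E_2^{s,0}$ is a finite iterated extension of finitely generated modules, so finitely generated; this also yields the concluding ``for all $q$'' clause. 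Converse: assuming $H^s(G;k[G])$ finitely generated for $s\le[(n-1)/2]$, if $j\le[(n-1)/2]$ then every $E_2^{s,t}$ with $s+t=j$ has $t\le[n/2]$ and $s\le[(n-1)/2]$, hence is finitely generated by the engine, so the abutment $H_{n-j}(M_\nu;k)$ is finitely generated; since $[n/2]+[(n-1)/2]=n-1$, the ranges $p\le[n/2]$ and $p\ge n-[(n-1)/2]=[n/2]+1$ exhaust all $p\ge0$, and every $H_p(M_\nu;k)$ is finitely generated.

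For the last assertion, assume $H^s(G;k[G])=0$ for $s<q$ and prove $H_{n-s}(M_\nu;k)=0$ for $s<q$ by induction on $s$. Granting $H_b(M_\nu;k)=0$ for $b>n-s$, take $E_2^{a,b}$ with $a+b=s$: if $b\le[n/2]$ then $H_b(M_\nu;k)$ is finitely generated and, since $a<q$, the engine gives $Ext^a_{k[G]}(H_b(M_\nu;k),k[G])=0$; if $b>[n/2]$ then (since $a\ge0$ forces $b\le s$) an elementary computation with $[n/2]$ shows $b\ge n-s+1$, so $H_b(M_\nu;k)=0$ by hypothesis. Thus the degree-$s$ abutment vanishes. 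The same dichotomy in total degree $q$ kills every $E_2^{a,b}$ with $a+b=q$ except $E_2^{q,0}=H^q(G;k[G])$, and no differential touches $E_r^{q,0}$ (outgoing ones have negative target, while an incoming $d_r$ issues from $E_2^{q-r,\,r-1}$, which vanishes because either $r-1\le[n/2]$ and $q-r<q$, so the Ext vanishes, or $r-1>[n/2]$, so $H_{r-1}(M_\nu;k)=0$ by the vanishing just established); therefore $H_{n-q}(M_\nu;k)\cong H^q(G;k[G])$. I expect the only real difficulty to be this bookkeeping --- arranging the inductions so that in each total degree the homology groups $H_b(M_\nu;k)$ with $b>[n/2]$, which are not covered by the standing hypothesis, have already been forced to vanish, and checking that the edge column $E_*^{*,0}$ receives no stray differentials --- which rests entirely on the elementary arithmetic relating $[n/2]$, $[(n-1)/2]$ and $n-[n/2]$.
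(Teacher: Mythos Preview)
Your argument is correct and follows the same route as the paper: the Universal Coefficient spectral sequence for $H^*(M;k[G])$, identified with $H_{n-*}(M_\nu;k)$ by Poincar\'e duality, together with the Ext estimates of Lemmas~2 and~3 (which you cite as ``Lemmas~1 and~2''---your numbering is off by one). Your treatment is more explicit than the paper's, which disposes of the final assertion in one line; in particular you spell out the inductive bookkeeping showing that the rows $t>[n/2]$ not covered by the standing hypothesis have already been forced to vanish, and that the edge term $E_r^{q,0}$ survives untouched.
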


\begin{proof} 
Let $E_2^{pq}=Ext^q_{k[G]}(H_p(M;k[G]),k[G])\Rightarrow{H^{p+q}}(M;k[G])$
be the Universal Coefficient spectral sequence for the equivariant 
cohomology of $M$.
Then $E_2^{pq}=Ext^q_{k[G]}(H_p(M_\nu;k),k[G])$, while
$H^{p+q}(M;k[G])\cong{H_{n-p-q}}(M_\nu;k)$, by
Poincar\'e duality for $M$. 

If $H^q(G;k[G])$ is finitely generated for $q\leq[(n-1)/2]$
then $E_2^{pq}$ is finitely generated for all $p+q\leq[(n-1)/2]$, 
by Lemmas 2 and 3.
Hence $H_p(M_\nu;k)$ is finitely generated for all $p\geq{n}-[(n-1)/2]$,
and hence for all $p$.
Conversely, if this holds and $H^s(G;k[G])$ is finitely generated for $s<q$
then $E_r^{ps}$ is finitely generated for all $p\geq0$, $r\geq2$ and $s<q$.
Since $H^q(M;k[G])\cong{H_{n-q}}(M_\nu;k)$ is finitely generated
as a $k$-module
it follows that $H^q(G;k[G])$ is finitely generated as a $k$-module.
Hence $H^q(G;k[G])$ is finitely generated for all $q$.

The final assertion is an immediate consequence of duality and the universal
coefficient spectral sequence.
\end{proof}

\begin{theorem}
If  $H_p(M_\nu;k)$ is finitely generated for all $p$ then $G$ is $FP_\infty$ 
over $k$ and $H^s(G;k[G])\not=0$ for some $s\leq{n}$.
If moreover $k=\mathbb{Z}$ and $v.c.d.G<\infty$ 
then $G$ is virtually a $PD_r$-group, for some $r\leq{n}$.
\end{theorem}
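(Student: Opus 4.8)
The plan is to feed the hypothesis into Lemma 4 and then into standard finiteness and Poincar\'e duality criteria for groups. Since $H_p(M_\nu;k)$ is finitely generated over $k$ for all $p$, it is so for $p\le[n/2]$, so Lemma 4 applies and gives that $H^q(G;k[G])$ is finitely generated as a $k$-module for every $q$, with its last assertion relating the lowest nonzero such cohomology to the highest nonzero homology of $M_\nu$. Exactly as in the proof of Lemma 1, since $M$ is a $PD_n$-space the chain complex $C_*(\widetilde M)$ is chain homotopy equivalent over $\mathbb Z[\pi]$ to a finite projective complex $P_*$, so $C_*(M_\nu;k)\simeq k[G]\otimes_{k[\pi]}(k\otimes_{\mathbb Z}P_*)$ is chain homotopy equivalent over $k[G]$ to a finite complex $Q_*$ of finitely generated projective $k[G]$-modules, with $H_*(Q_*)=H_*(M_\nu;k)$ finitely generated over $k$ and $H_0(Q_*)=k$.

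For the $FP_\infty$ assertion: $k=H_0(Q_*)$ is a cokernel of a map of finitely generated projective $k[G]$-modules, hence finitely presented, so $G$ is already $FP_2$ over $k$. To promote this to $FP_\infty$ one resolves the homology of $Q_*$ from the top down. The module of top-degree cycles of $Q_*$ is a $k[G]$-submodule of a finitely generated projective module which is finitely generated over $k$; such a module vanishes when $G$ is infinite (a nonzero $G$-submodule of $k[G]^r$ that is finitely generated over $k$ would have all its $G$-translates supported in a fixed finite subset of $G$, forcing $G$ finite) and is $FP_\infty$ over $k[G]$ when $G$ is finite (since then $k[G]$ is Noetherian). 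Carrying this through the short exact sequences of cycles and boundaries of $Q_*$, and using Lemmas 2 and 3 to control $Ext^*_{k[G]}(H_p(M_\nu;k),k[G])$ by the finitely generated modules $H^*(G;k[G])$, one obtains inductively that each $H_p(M_\nu;k)$ is $FP_\infty$ over $k[G]$; a d\'evissage up the mapping-cone filtration of the perfect complex $Q_*$ then exhibits $k=H_0(Q_*)$ as built from $FP_\infty$ modules, so $k$ is $FP_\infty$ over $k[G]$, i.e.\ $G$ is $FP_\infty$ over $k$. For the nonvanishing: $H^n(M;k[G])\cong H_0(M_\nu;k)=k\ne0$ by Poincar\'e duality for $M$, so in the universal coefficient spectral sequence of Lemma 4 some $E_2^{pq}=Ext^q_{k[G]}(H_p(M_\nu;k),k[G])$ with $p+q=n$ is nonzero, and by Lemmas 2 and 3 this forces $H^s(G;k[G])\ne0$ for some $s\le n$.

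Now suppose $k=\mathbb Z$ and $v.c.d.\,G<\infty$. Choose a torsion-free subgroup $G_1$ of finite index in $G$; then $\mathrm{cd}(G_1)=v.c.d.\,G=:d<\infty$, and $G_1$ is $FP_\infty$ over $\mathbb Z$ (being of finite index in an $FP_\infty$ group), hence of type $FP$. Running the argument above with the finite cover $M_1$ of $M$ with $\pi_1(M_1)$ the preimage of $G_1$ in $\pi$ (again a $PD_n$-space, having $M_\nu$ as its $G_1$-cover) shows that $H^q(G_1;\mathbb Z[G_1])$ is finitely generated over $\mathbb Z$ for all $q$. By Farrell's theorem $H^d(G_1;\mathbb Z[G_1])$ is free abelian, and being nonzero (as $d=\mathrm{cd}$) and finitely generated it is infinite cyclic (for $d\le1$ one argues directly, $G_1$ being trivial or free). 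Writing $q_1$ for the least $i$ with $H^i(G_1;\mathbb Z[G_1])\ne0$, the last assertion of Lemma 4 (for $M_1$) gives $H_j(M_\nu;\mathbb Z)=0$ for $j>n-q_1$ and $H_{n-q_1}(M_\nu;\mathbb Z)\cong H^{q_1}(G_1;\mathbb Z[G_1])$; the Cartan--Leray spectral sequence of $M_\nu\to M_1$ then shows that $q_1<d$ is incompatible with $M_1$ having no homology above degree $n$, so $q_1=d$ and $G_1$ is a $PD_d$-group by the Bieri--Eckmann criterion. Moreover $d\le n$, since $H_{n-d}(M_\nu;\mathbb Z)\cong H^d(G_1;\mathbb Z[G_1])\ne0$. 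Hence $G$ is virtually a $PD_d$-group with $d\le n$.

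I expect the $FP_\infty$ step to be the main obstacle: the d\'evissage is only valid once the intermediate homology modules $H_p(M_\nu;k)$ are known to be $FP_\infty$ over $k[G]$, so the induction must be arranged to avoid circularity, and it is here that the hypothesis that $H_*(M_\nu;k)$ is finitely generated over $k$ (not merely over $k[G]$) really bites. The same delicacy recurs in the last paragraph, where squeezing the vanishing of $H^i(G_1;\mathbb Z[G_1])$ for $i<d$ out of Poincar\'e duality on $M_1$ is precisely the point at which one uses that $M$ is a genuine $PD$-space rather than just a space whose fundamental group has some homological finiteness property.
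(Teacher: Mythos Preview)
Your outline matches the paper's exactly: pass to a finite projective $k[G]$-complex $Q_*$ computing $H_*(M_\nu;k)$, deduce that $G$ is $FP_\infty$ over $k$, read off nonvanishing of some $H^s(G;k[G])$ from Lemma~4 and Poincar\'e duality, and then (for $k=\mathbb{Z}$, $v.c.d.\,G<\infty$) pass to a finite-index subgroup of finite cohomological dimension and invoke Farrell.  The difference is that the paper treats the two hard steps as black boxes, citing \cite{St96} for $FP_\infty$ and Theorem~3 of \cite{Fa} for the $PD_r$-conclusion, whereas you try to open both boxes.

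Neither attempted direct argument goes through as written.  For the $FP_\infty$ step, your d\'evissage needs each $H_p(M_\nu;k)$ to be $FP_\infty$ over $k[G]$ before you can splice resolutions into $Q_*$, but the natural way to see this (tensor a projective resolution of $k$ with the finitely generated $k$-module $H_p$) already presupposes that $G$ is $FP_\infty$; you flag this circularity yourself, and it is real.  Stark's argument in \cite{St96} avoids it by a genuinely different construction (building resolutions modelled on trees), which your sketch does not reproduce.  For the $PD_r$ step, you use only the weaker consequence of \cite{Fa} that $H^d(G_1;\mathbb{Z}[G_1])$ is torsion-free; but torsion-free, finitely generated and nonzero gives $\mathbb{Z}^m$ for some $m\geq1$, not $\mathbb{Z}$, so your ``infinite cyclic'' claim is unjustified.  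The Cartan--Leray step meant to force $q_1=d$ also has a gap: the corner term is $E^2_{d,\,n-q_1}=H_d(G_1;H_{n-q_1}(M_\nu;\mathbb{Z}))$, and there is no general reason $H_d(G_1;A)\ne0$ for an arbitrary nonzero finitely generated $G_1$-module $A$.  The paper sidesteps both issues by citing Farrell's Theorem~3 in full strength: an $FP$ group with every $H^i(G;\mathbb{Z}[G])$ finitely generated as an abelian group is a $PD$-group.  That single citation replaces your last paragraph entirely.
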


\begin{proof} 
Let $C_*(\widetilde{M})$ be the equivariant chain complex of the 
universal covering space $\widetilde{M}$.
Since $M$ is a $PD_n$-space $C_*(\widetilde{M})$ is chain homotopy 
equivalent to a finite projective $\mathbb{Z}[\pi]$-complex.
Hence $C_*(M_\nu;k)=k[G]\otimes_{\mathbb{Z}[\pi]}C_*(\widetilde{M})$ 
is chain homotopy 
equivalent to a finite projective $k[G]$-complex.
The arguments of \cite{St96} apply equally well with 
coefficients $k$ a field (instead of $\mathbb{Z}$), 
and thus the hypotheses of Lemma 4 imply that
$G$ is $FP_\infty$ over $k$.

If $v.c.d.G<\infty$ we may assume without loss of generality 
that $c.d.G<\infty$, and so $G$ is $FP$.
Since $H_q(M_\nu;\mathbb{Z}))$ is finitely generated for all $q$ 
the groups $H^s(G;\mathbb{Z}[G])$ are all finitely generated, 
and since $H_0(M_\nu;\mathbb{Z})=\mathbb{Z}$ we must have 
$H^s(G;\mathbb{Z}[G])\not=0$ for some $s\leq{n}$, by Lemma 4.
Then $G$ is a $PD_s$-group, by Theorem 3 of \cite{Fa}.
\end{proof}

A finitely generated group $G$ is a {\it weak $PD_r$-group\/} if 
$H^r(G;\mathbb{Z}[G])\cong\mathbb{Z}$ and $H^q(G;\mathbb{Z}[G])=0$ 
 for $q\not=r$.
Theorem 5 complements the main result of \cite{HK}, 
in which it is shown that if the $\mathbb{Z}[\nu]$-chain complex 
$C_*(\widetilde{M_\nu})=C_*(\widetilde{M})|_\nu$ has finite $[n/2]$-skeleton 
and $G$ is a weak $PD_r$-group then $M_\nu$ is a $PD_{n-r}$-space.

For each $n\geq2$ and $k\geq\binom{n+1}2$
there are weak $PD_k$-groups which act freely 
and cocompactly on $S^{2n-1}\times\mathbb{R}^k$, 
but which are not virtually torsion-free \cite{FS}. 
Thus if $r\geq6$ weak $PD_r$-groups need not be virtual $PD_r$-groups,
and so the other conditions in Theorem 5
do not imply that $v.c.d.G<\infty$,
in general.
Weak $PD_1$-groups have two ends, and so are virtually $\mathbb{Z}$, 
while $FP_2$ weak $PD_2$-groups are virtual $PD_2$-groups \cite{Bo}.
Little is known about the intermediate cases $r=3,4$ or 5.
In particular,
it is not known whether a group $G$ of type $FP_\infty$
such that $H^3(G;\mathbb{Z}[G])\cong\mathbb{Z}$ must be a virtual $PD_3$-group.
(The fact that local homology manifolds which are homology 2-spheres 
are standard may be some slight evidence for this being true.)

Stark's argument for realization in the finitely presentable case
can be adapted to show that any virtual $PD_n$-group 
acts freely on a 1-connected homotopy finite complex,
with quotient a $PD_m$-space for some $m\geq{n}$. 
However finite presentability is needed in order to obtain a free 
{\it cocompact\/} action on a $1$-connected complex.
A natural converse to Theorem 5 (analogous to Stark's realization result)
might be that every virtual $PD$ group $G$ acts freely 
and cocompactly on some connected manifold $X$ 
with $H_q(X;\mathbb{Z})$ finitely generated for all $q$.
It would suffice to show that $G\cong\pi/\nu$ where $\pi$ 
is a finitely presentable $vPD$-group and $\nu$ is a normal subgroup 
such that $H_*(\nu;\mathbb{Z})$ is finitely generated.
For there is a closed PL manifold $M$ with $\pi_1(M)\cong\pi$ 
and $\widetilde{M}$ homotopy finite, by Stark's result. 
The quotient group $G$ acts freely and cocompactly on $M_\nu$,
and a spectral sequence argument shows that
$H_*(M_\nu;\mathbb{Z})$ is finitely generated.

\section{finitely dominated covering spaces of $PD_4$-spaces}

Let $M$ be a $PD_4$-space with fundamental group $\pi$, 
and suppose that $M$ has a finitely dominated infinite regular 
covering space $M_\nu$.
Then $\nu=\pi_1(M_\nu)$ is finitely presentable and $\pi/\nu$ has one or two
ends.
In \cite{Hi} we showed that if $\pi/\nu$ has two ends then
$M$ is the mapping torus of a self homotopy equivalence of a $PD_3$-complex,
while if $\pi/\nu$ has one end and $\nu$ is $FP_3$ 
then either the universal covering space $\widetilde{M}$ is contractible 
or homotopy equivalent to $S^2$.
We shall show here that the hypothesis that $\nu$ be $FP_3$
is redundant if $M$ is a closed 4-manifold, 
or more generally if $M$ is a finite $PD_4$-space.

The results from \cite{Hi} used in the next theorem
were originally formulated in terms of $PD_4$-complexes.
The arguments given in \cite{Hi} apply equally well to $PD_4$-spaces,
since they need only the $L^{(2)}$-Euler characteristic formula 
of Lemma 1 above.

\begin{theorem}
Let $M$ be a finite $PD_4$-space with fundamental group $\pi$,
and let $\nu$ be an infinite normal subgroup of $\pi$ such that
$G=\pi/\nu$ has one end and
the associated covering space $M_\nu$ is finitely dominated.
Then $G$ is of type $FP_\infty$ and $M$ is aspherical.
\end{theorem}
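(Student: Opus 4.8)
The plan is to extract the $FP_\infty$ statement directly from Theorem 5, then to upgrade the hypothesis ``$M_\nu$ is finitely dominated'' to ``$\nu$ is $FP_3$'' so that the dichotomy of \cite{Hi} becomes available, and finally to exclude the $S^2$ alternative. Since $M_\nu$ is finitely dominated, $\nu$ is finitely presentable and $H_p(M_\nu;\mathbb{Z})$ is finitely generated for all $p$. As $\nu$ and $G$ are both infinite (the latter because it has one end), the group $\pi$ has one end; in particular $H^0(\pi;\mathbb{Z}[\pi])=H^1(\pi;\mathbb{Z}[\pi])=0$. Applying Theorem 5 with $n=4$ and $k=\mathbb{Z}$ now gives that $G$ is of type $FP_\infty$ and $H^s(G;\mathbb{Z}[G])\neq0$ for some $s$ with $2\leq s\leq4$. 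This settles the first assertion, and shows moreover that $\pi$, being an extension of the finitely presentable group $G$ by the finitely presentable group $\nu$, is finitely presentable; hence $M$ is a finitely dominated $PD_4$-complex.

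For asphericity I would first pin down the homology of $\widetilde M$. Since $\widetilde M$ is $1$-connected and $\pi$ is infinite, Poincar\'e duality for $M$ gives $H_4(\widetilde M;\mathbb{Z})\cong H^0(M;\mathbb{Z}[\pi])=0$; and since $\pi$ has one end, the universal coefficient spectral sequence for the equivariant cohomology of $M$ identifies $H^1(M;\mathbb{Z}[\pi])$ with a subgroup of $H^1(\pi;\mathbb{Z}[\pi])=0$, so $H_3(\widetilde M;\mathbb{Z})\cong H^1(M;\mathbb{Z}[\pi])=0$. Thus $H_*(\widetilde M;\mathbb{Z})$ is concentrated in degrees $0$ and $2$, with $H_0=\mathbb{Z}$ and $H_2=\Pi$, where $\Pi:=\pi_2(M)$. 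Now $C_*(\widetilde{M_\nu})=C_*(\widetilde M)|_\nu$ is chain homotopy equivalent over $\mathbb{Z}[\nu]$ to a finite complex $Q_*$ of finitely generated projective modules, with $H_0(Q_*)=\mathbb{Z}$, $H_2(Q_*)=\Pi$ and $H_q(Q_*)=0$ otherwise. In particular $\Pi$ is finitely generated over $\mathbb{Z}[\nu]$, and therefore so is $\ker(Q_2\to Q_1)$, which is an extension of $\Pi$ by the image of $Q_3\to Q_2$. Mapping a finitely generated free $\mathbb{Z}[\nu]$-module onto this kernel and splicing it onto $Q_2\to Q_1\to Q_0\to\mathbb{Z}\to0$ yields a projective resolution of $\mathbb{Z}$ over $\mathbb{Z}[\nu]$ that is finitely generated in degrees $\leq3$; that is, $\nu$ is $FP_3$.

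The arguments of \cite{Hi} now apply --- as the excerpt remarks, they extend from $PD_4$-complexes to finite $PD_4$-spaces because they use only the $L^{(2)}$-Euler characteristic formula of Lemma 1, which is the one point at which the finiteness of $M$ enters --- so $\widetilde M$ is either contractible or homotopy equivalent to $S^2$. To exclude the second possibility, suppose $\widetilde M\simeq S^2$. Then $\widetilde M$ is a finitely dominated $PD_2$-complex, and applying the Gottlieb-Quinn theorem to $\widetilde M\to M\to K(\pi,1)$ shows that $\pi$ is a $PD_2$-group. But a $PD_2$-group has no infinite normal subgroup whose quotient has one end: such a subgroup $\nu$ would have infinite index, hence cohomological dimension $\leq1$ by Strebel's theorem, hence would be free of finite rank (being finitely presentable), and then either $\pi$ is virtually $\mathbb{Z}^2$ --- forcing $G$ to be virtually cyclic, a contradiction --- or the covering of the closed aspherical surface $K(\pi,1)$ corresponding to $\nu$ is a non-compact surface of finite topological type carrying a free cocompact properly discontinuous action of the infinite group $G$, which is impossible (on an annular end such an action would induce a properly discontinuous action of an infinite group on a circle). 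Hence $\widetilde M$ is contractible and $M$ is aspherical.

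The step I expect to be the crux is the passage from ``$M_\nu$ finitely dominated'' to ``$\nu$ is $FP_3$''. This is precisely where the $PD_4$-geometry does the work: one-endedness of $\pi$, together with Poincar\'e duality, forces $H_*(\widetilde M;\mathbb{Z})$ to have only the two nonzero groups $\mathbb{Z}$ and $\pi_2(M)$, so that a finite projective $\mathbb{Z}[\nu]$-complex representing $M_\nu$ can be truncated to a partial resolution of the trivial module. This is the ingredient that makes the $FP_3$ hypothesis of \cite{Hi} redundant; the exclusion of the $S^2$ case and the bookkeeping with Theorem 5 are comparatively routine.
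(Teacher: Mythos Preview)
Your argument has a genuine gap at its declared crux. You assert that ``$\Pi$ is finitely generated over $\mathbb{Z}[\nu]$'' on the grounds that $\Pi=H_2(Q_*)$ for a finite projective $\mathbb{Z}[\nu]$-complex $Q_*$. This inference fails: homology groups of finite projective complexes need not be finitely generated over non-noetherian rings. Concretely, if $\nu$ is any finitely presentable group which is not $FP_3$ and $X$ is its presentation $2$-complex, then $X$ is finite, $H_q(\widetilde X)=0$ for $q\neq0,2$, yet $\pi_2(X)=\ker(C_2(\widetilde X)\to C_1(\widetilde X))$ is \emph{not} finitely generated over $\mathbb{Z}[\nu]$. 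In fact, given your exact sequence $0\to\mathrm{im}\,\partial_3\to\ker\partial_2\to\Pi\to 0$ with $\mathrm{im}\,\partial_3$ finitely generated, the statement ``$\Pi$ is finitely generated over $\mathbb{Z}[\nu]$'' is \emph{equivalent} to ``$\nu$ is $FP_3$'', so your argument is circular at precisely the point you identify as decisive. The vanishing of $H_3(\widetilde M)$ and $H_4(\widetilde M)$ that you extract from Poincar\'e duality does not rescue this: the $2$-complex example already has vanishing higher homology. (There is also a minor slip: $FP_\infty$ does not imply finite presentability, so you cannot conclude that $G$, and hence $\pi$, is finitely presentable.)

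The paper's proof avoids this trap entirely. Rather than trying to verify the $FP_3$ hypothesis needed for the dichotomy of \cite{Hi}, it works directly with the homology of $M_\nu$: after disposing of the cases already covered by Theorem~3.9 of \cite{Hi}, it uses Bowditch's theorem to force $H^2(G;k[G])=0$, deduces via Farrell's theorem that $\nu/\nu'\cong H^3(G;\mathbb{Z}[G])\cong\mathbb{Z}$ so that $M_\nu$ is a homology circle, and then passes to the further cover $M_{\nu'}$. A Wang-sequence and LHS-spectral-sequence argument shows $M_{\nu'}$ is acyclic, so $\tilde G=\pi/\nu'$ is a $PD_4$-group; a localization argument (using finiteness of $M$) gives $\chi(M)=0$; and Corollary~3.5.2 of \cite{Hi} then yields asphericity. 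No claim that $\nu$ is $FP_3$ is ever made or needed.
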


\begin{proof}
Let $k$ be $\mathbb{Z}$ or a field.
Then $G$ is of type $FP_\infty$ and $H^q(G;k[G])$ is finitely generated 
as a $k$-module for all $q$, 
by Lemma 4 and Theorem 5. 
Moreover $Ext^q_{k[\pi]}(H_p(M_\nu;k),k[\pi])=0$ for $q\leq1$ and all $p$,
since $G$ has one end, and so $H_q(M_\nu;k)=0$ for $q\geq3$.
In particular, $H^2(G;\mathbb{Z}[G])\cong{H_2(M_\nu;\mathbb{Z})}$ is 
torsion-free, and so is a free abelian group of finite rank.

We may assume that $M_\nu$ is not acyclic and $G$ 
is not virtually a $PD_2$-group,
by Theorem 3.9 of \cite{Hi}.
Therefore $H^2(G;k[G])=0$ for all $k$,
by the main result of \cite{Bo}.
Hence $H_2(M_\nu;\mathbb{F}_p)=0$ for all primes $p$,
so $H_1(M_\nu;\mathbb{Z})$ is torsion-free and nonzero.
Therefore $H^s(G;\mathbb{Z}[G])=H_{4-s}(M_\nu;\mathbb{Z})=0$ for $s<3$
and $H^3(G;\mathbb{Z}[G])\cong H_1(M_\nu;\mathbb{Z})=\nu/\nu'$ 
is a nontrivial finitely generated abelian group.
Therefore $\nu/\nu'\cong{H^3(G;\mathbb{Z}[G])}\cong\mathbb{Z}$ \cite{Fa}.

Thus we may assume that $M_\nu$ is an homology circle.
Let $\tilde{G}=\pi/\nu'$ and let $t\in\tilde{G}$ represent a generator of the
infinite cyclic group $\nu/\nu'$.
Let $M_\nu'$ be the covering space associated to the subgroup $\nu'$.
Since $M_\nu$ is finitely dominated a Wang sequence argument
shows that $H_q(M_\nu';k)$ is a finitely generated $k[t,t^{-1}]$-module on
which $t-1$ acts invertibly, for all $q>0$.
Then $H_q(M_\nu';\mathbb{F}_p)$ is finitely generated for all primes $p$ 
and all $q>0$.
Now $H^s(\tilde{G};k[\tilde{G}])=0$ for all $k$ and all $s<4$,
by a Lyndon-Hochschild-Serre spectral sequence argument.
Therefore $H_q(M_\nu';\mathbb{F}_p)=0$ for all primes $p$ and all $q>0$,
by Lemma 4.
Nontrivial finitely generated $\mathbb{Z}[t,t^{-1}]$-modules have nontrivial
finite quotients, and so we may conclude that $M_\nu'$ is acyclic. 

Since $M$ is a $PD_4$-space $C_*(\widetilde{M})$
is chain homotopy equivalent to a finite projective 
$\mathbb{Z}[\pi]$-complex $C_*$. 
Thus $D_*=\mathbb{Z}\otimes_{\mathbb{Z}[\nu']}C_*$ is a finite projective 
$\mathbb{Z}[\tilde{G}]$-complex, and is a resolution of $\mathbb{Z}$.
Therefore $\tilde{G}$ is a $PD_4$-group.
(In particular, we see again that $G=\tilde{G}/(\nu/\nu')$ is $FP_\infty$.)

Since $\nu/\nu'$ is a torsion-free abelian normal subgroup of $\tilde{G}$ 
the group ring $\mathbb{Z}[\tilde{G}]$ has a flat extension $R$, 
obtained by localising with respect to the nonzero elements of 
$\mathbb{Z}[t,t^{-1}]$, such that 
$R\otimes_{\mathbb{Z}[\tilde{G}]}\mathbb{Z}=0$.
(See page 23 of \cite{Hi} and the references there.)
Hence $R\otimes_{\mathbb{Z}[\tilde{G}]}D_*$ is a contractible complex 
of finitely generated projective $R$-modules.

We may in fact assume that $C_*$ is a finite {\it free} 
$\mathbb{Z}[\pi]$-complex,
since $M$ is a {\it finite} $PD_4$-space.
It follows that $\chi(M)=\chi(R\otimes_{\mathbb{Z}[\tilde{G}]}D_*)=0$.
Since $\nu$ is an infinite $FP_2$ normal subgroup of $\pi$ and $\pi/\nu$ 
has one end $\beta_1^{(2)}(\pi)=0$ and $H^s(\pi;\mathbb{Z}[\pi])=0$ 
for $s\leq2$.
Therefore $M$ is aspherical, by Corollary 3.5.2 of \cite{Hi}.
\end{proof}

With this result we may now reformulate Theorem 3.9 of \cite{Hi}
as follows.

\begin{cor}
A finite $PD_4$-space $M$ has a finitely dominated infinite
regular covering space if and only if either $M$ is aspherical, 
or $\widetilde{M}\simeq{S^2}$,
or $M$ has a $2$-fold cover which is homotopy equivalent to the mapping torus 
of a self-homotopy equivalence of a $PD_3$-complex.
If $M$ has a finitely dominated regular covering space 
and is not aspherical it is a $PD_4$-complex.
\end{cor}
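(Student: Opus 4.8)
The plan is to obtain the Corollary by feeding Theorem~6 into Theorem~3.9 of \cite{Hi}, which it reformulates. Recall that if the finite $PD_4$-space $M$ has a finitely dominated infinite regular covering space $M_\nu$, then $\nu=\pi_1(M_\nu)$ is finitely presentable and $G=\pi/\nu$ has one or two ends; the dichotomy on ends also follows from Lemma~4, since $M_\nu$ being finitely dominated makes all the groups $H_p(M_\nu;\mathbb{Z})$ finitely generated and hence $H^1(G;\mathbb{Z}[G])$ finitely generated. I would treat the ``only if'' direction by splitting on this dichotomy, and, in the one-ended case, according to whether $\nu$ is finite or infinite.

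If $G$ has two ends, Theorem~3.9 of \cite{Hi} already provides a $2$-fold cover of $M$ homotopy equivalent to the mapping torus of a self homotopy equivalence of a $PD_3$-complex (one may take $M$ itself when $G$ surjects onto $\mathbb{Z}$, and a genuine double cover is needed only when it does not, for instance when $G$ is virtually infinite dihedral). If $G$ has one end and $\nu$ is finite, then $\nu$ is $FP_\infty$, in particular $FP_3$, so Theorem~3.9 of \cite{Hi} applies and gives that $\widetilde M$ is contractible or homotopy equivalent to $S^2$. There remains the case that $G$ has one end and $\nu$ is infinite; here $\nu$ need not be $FP_3$ and \cite{Hi} gives nothing directly, but this is exactly the hypothesis of Theorem~6, which yields that $M$ is aspherical. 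These three cases together give the stated trichotomy.

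For the converse, if $M$ is aspherical then $\widetilde M$ is contractible, while if $\widetilde M\simeq S^2$ then $\widetilde M$ has the homotopy type of a finite complex; in either case $\widetilde M$ is a finitely dominated regular cover, of infinite index since $\pi$ is infinite (a $PD_4$-group is infinite, and if $\widetilde M\simeq S^2$ with $\pi$ finite a transfer argument would contradict Poincar\'e duality for $M$). If $M$ has a $2$-fold cover homotopy equivalent to a mapping torus, then $M$ again has a finitely dominated infinite regular cover by the converse half of Theorem~3.9 of \cite{Hi} (the first cohomology class of the double cover that defines the fibration, together with its conjugate under the deck involution, is used to produce an epimorphism from $\pi$ onto $\mathbb{Z}$ or onto an infinite dihedral group whose kernel is normal in $\pi$ and has finitely dominated associated cover). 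For the last assertion, suppose $M$ has a finitely dominated regular covering space and is not aspherical. If that cover is finite-sheeted, its fundamental group is finitely presentable (a finitely dominated complex has finitely presentable fundamental group) and of finite index in $\pi$, so $\pi$ is finitely presentable. If the cover is infinite-sheeted, then by the trichotomy either $\widetilde M\simeq S^2$, whence $\pi$ is virtually a $PD_2$-group by \cite{Hi} and \cite{Bo}, or $M$ has a $2$-fold cover homotopy equivalent to the mapping torus of a self homotopy equivalence of a $PD_3$-complex $Y$, whence the finitely presentable group $\pi_1(Y)\rtimes\mathbb{Z}$ sits in $\pi$ with index $2$. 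In all cases $\pi$ is finitely presentable, so the finite $PD_4$-space $M$ is a $PD_4$-complex.

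The real content is Theorem~6; the rest is bookkeeping. The step that needs the most care is matching hypotheses: Theorem~3.9 of \cite{Hi} covers the one-ended case only when $\nu$ is $FP_3$, which is automatic for finite $\nu$ but genuinely restrictive for infinite $\nu$, and it is precisely this gap that Theorem~6 closes for finite $PD_4$-spaces. A subsidiary point, in the converse, is to be sure that the cover of $M$ produced in the mapping-torus case corresponds to a normal subgroup of $\pi$, so that it is regular; this is the reason the alternative is stated in terms of a $2$-fold cover rather than of $M$ itself.
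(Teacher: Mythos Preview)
Your proof is correct and follows essentially the same approach as the paper: combine Theorem~3.9 of \cite{Hi} with Theorem~6 for the main equivalence, and argue finite presentability of $\pi$ in the non-aspherical cases for the last sentence. The paper's own proof is much terser---it says only the final sentence needs comment, since the equivalence is a direct reformulation of Theorem~3.9 once Theorem~6 removes the $FP_3$ hypothesis---whereas you spell out the case analysis explicitly; your split of the one-ended case according to whether $\nu$ is finite (hence automatically $FP_3$, so Theorem~3.9 applies) or infinite (so Theorem~6 applies) is a useful clarification, and your treatment of the final sentence agrees with the paper's.
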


\begin{proof}
Only the final sentence needs any comment.
If $\widetilde{M}\simeq{S^2}$ then $\pi_1(M)$ is
virtually a $PD_2$-group and so is finitely presentable. 
This is also clear if $M$ has a $2$-fold cover which is the mapping torus 
of a self-homotopy equivalence of a $PD_3$-complex.
Thus in each case $M$ is a $PD_4$-complex.
\end{proof}

There are $PD_n$ groups of type $FF$ which are not finitely presentable, 
for each $n\geq4$ \cite{Da}.
The corresponding $K(G,1)$ spaces are aspherical finite $PD_n$-spaces 
which are not $PD_n$-complexes.

The hypothesis that $M$ be finite is used 
only in the final paragraph of the proof of Theorem 6,
in the appeal to Corollary 3.5.2 of \cite{Hi}
and in the calculation of $\chi(M)$.
(If we assumed instead that $v.c.d.G<\infty$ then 
we could use multiplicativity of the Euler characteristic
to show that $\chi(M)=0$.) 

A more substantial issue is that the argument for Theorem 6 
does not appear to extend to the case when $\nu$ is an ascendant subgroup 
of $\pi$, 
as considered in \cite{Hi08} (where the $FP_3$ condition is also used).
Is there an argument along the following lines?
Let $C_*$ be a finite projective $\mathbb{Z}[\pi]$-complex
with $H_0(C_*)\cong\mathbb{Z}$ and $H_1(C_*)=0$.
Show that $Hom_{\mathbb{Z}[\pi]}(H_2(C_*),\mathbb{Z}[\pi])=0$
if $[\pi:\nu]=\infty$ and $C_*|_\nu$ is chain homotopy equivalent 
to a finite projective $\mathbb{Z}[\nu]$-complex.
If so, the proofs of Theorem 3.9 of \cite{Hi} and Theorem 6 of \cite{Hi08}
would apply, without needing to assume that $\nu$ is $FP_3$ 
or that $M$ is finite.

\section{$PD_4$-complexes with $\pi_3$ finitely generated}

We conclude with an alternative characterization of $PD_4$-complexes 
as in the Corollary to Theorem 6.
Recall that there is a natural exact sequence of left 
$\mathbb{Z}[\pi]$-modules
\[
H_4(\widetilde{X};\mathbb{Z})\to\Gamma_W(\Pi)\to\pi_3(X)\to{H_3(\widetilde{X};\mathbb{Z})}\to0,
\]
where $\Gamma_W$ is the quadratic functor of Whitehead
and the third homomorphism is the Hurewicz homomorphism.

\begin{theorem}
Let $M$ be a $PD_4$-complex with infinite fundamental group $\pi$.
Then the following are equivalent:

\begin{enumerate}
\item either $M$ is aspherical, or $\widetilde{M}\sim{S^2}$ or $S^3$;

\item $\widetilde{M}$ is homotopy finite;

\item $\pi_3(M)$ is finitely generated as an abelian group;

\item $\pi$ has finitely many ends and $\pi_2(M)$ is finitely generated 
as an abelian group.
\end{enumerate}
\end{theorem}

\begin{proof}
Clearly $(1)\Rightarrow(2)\Rightarrow(3)$ and (4).
Since $\pi$ is finitely presentable,
$E^2\mathbb{Z}$ is torsion free, and so $\Pi=\pi_2(M)$ is torsion free also.
If $\pi_3(M)$ is finitely generated as an abelian group
then  $H_3(\widetilde{M};\mathbb{Z})$ and $\Gamma_W(\Pi)$ 
are finitely generated.
Hence $\pi$ has finitely many ends and $\Pi$ is finitely generated.
Thus $(3)\Rightarrow(4)$.

If (4) holds then $\pi$ has one or two ends and $Hom(\Pi;\mathbb{Z}[\pi])=0$. 
Hence $E^2\mathbb{Z}\cong\Pi$, by the evaluation exact sequence.
If $\pi$ has one end then either $E^2\mathbb{Z}=\Pi=0$, 
in which case $M$ is aspherical, or both are infinite cyclic,
in which case $\widetilde{M}\simeq{S^2}$.
If $\pi$ has two ends we may assume without loss of generality that 
$\pi\cong\mathbb{Z}$.
But then $\Pi=0$ and $\widetilde{M}\simeq{S^3}$.
Thus $(4)\Rightarrow(1)$.
\end{proof}

In particular, if $\pi$ is infinite and either
$\pi_3(M)=0$ or $\pi$ has one end and $\pi_2(M)=0$ then $M$ is aspherical.
However, if $M=\#^rS^1\times{S^3}$ for some $r>1$
then $\pi_2(M)=0$ but $\pi_3(M)$ is not finitely generated.

\end{document}